\documentclass[a4paper,12pt]{article}

\textwidth=155mm
\textheight=235mm
\setlength{\topmargin}{0mm}
\setlength{\evensidemargin}{0mm}
\setlength{\oddsidemargin}{0mm}

%
%


%
%

\usepackage{makeidx}

\usepackage{latexsym}
\usepackage{amscd}
\usepackage{amsmath}
\usepackage{amssymb}

\usepackage{amsthm}
\usepackage{float}
\usepackage{graphicx}
\usepackage{psfrag}
\usepackage{pst-node}
\usepackage{pst-plot}

%
%

\theoremstyle{plain}
\newtheorem{theorem}{Theorem}[section]
\newtheorem{corollary}[theorem]{Corollary}
\newtheorem{lemma}[theorem]{Lemma}

\newtheorem{proposition}[theorem]{Proposition}

\newtheorem{definition-lemma}[theorem]{Definition-Lemma}

%
%

\theoremstyle{definition}
\newtheorem{definition}[theorem]{Definition}

%
%

\newcommand{\nin}{\not \in}


\newdimen\argwidth
\def\db[#1\db]{%
  \setbox0=\hbox{$#1$}\argwidth=\wd0
  \setbox0=\hbox{$\left[\box0\right]$}
    \advance\argwidth by -\wd0
  \left[\kern.3\argwidth\box0 \kern.3\argwidth\right]}

\newcommand{\Ob}{\mathfrak{Ob}}

\newcommand{\id}{\operatorname{id}}

\newcommand{\vspan}{\operatorname{span}}

%
%

\newcommand{\xto}{\xrightarrow}

\newcommand{\simto}{\xrightarrow{\sim}}

\newcommand{\la}{\left\langle}
\newcommand{\ra}{\right\rangle}

\newcommand{\lc}{\left\{}
\newcommand{\rc}{\right\}}

\newcommand{\vin}{\rotatebox{90}{$\in$}}

%
%

\newcommand{\Ext}{\mathop{\mathrm{Ext}}\nolimits}
\newcommand{\Hom}{\mathop{\mathrm{Hom}}\nolimits}

\newcommand{\Spec}{\operatorname{Spec}}

\newcommand{\bCx}{\bC^{\times}}

\newcommand{\coh}{\operatorname{coh}}

\newcommand{\module}{\operatorname{mod}}

%
%

\newcommand{\Fuk}{\mathop{\mathfrak{Fuk}}\nolimits}
\newcommand{\Lag}{\mathop{{\scL}ag}\nolimits}
\newcommand{\Lagtilde}{\mathop{\widetilde{{\scL}ag}}\nolimits}
\newcommand{\m}{\mathfrak{m}}

%
%

\newcommand{\bC}{\ensuremath{\mathbb{C}}}

\newcommand{\bR}{\ensuremath{\mathbb{R}}}

\newcommand{\bZ}{\ensuremath{\mathbb{Z}}}

%
%

\newcommand{\scA}{\ensuremath{\mathcal{A}}}
\newcommand{\scB}{\ensuremath{\mathcal{B}}}
\newcommand{\scC}{\ensuremath{\mathcal{C}}}

\newcommand{\scI}{\ensuremath{\mathcal{I}}}

\newcommand{\scL}{\ensuremath{\mathcal{L}}}
\newcommand{\scM}{\ensuremath{\mathcal{M}}}

\newcommand{\scO}{\ensuremath{\mathcal{O}}}

\newcommand{\scS}{\ensuremath{\mathcal{S}}}

%
%

\newcommand{\frakm}{\ensuremath{\mathfrak{m}}}

%
%

\newcommand{\stilde}{\widetilde{s}}
\newcommand{\vtilde}{{\tilde{v}}}

\newcommand{\Gtilde}{\widetilde{G}}

\newcommand{\Mtilde}{\widetilde{M}}

\newcommand{\gammatilde}{{\widetilde{\gamma}}}
\newcommand{\phitilde}{{\widetilde{\phi}}}

%

\newcommand{\Mbar}{{\overline{M}}}

%
%

\newcommand{\scdot}{{\, \cdot}}

\newcommand{\pss}{{p^{\sigma \sigma}}}

\newcommand{\dirA}{{A^\to}}
\newcommand{\dirQ}{{Q^\to}}

\newcommand{\dirscA}{{\scA^\to}}
\newcommand{\dirscB}{{\scB^\to}}
\newcommand{\dirscC}{{\scC^\to}}
\newcommand{\dirscI}{{\scI^\to}}

\newcommand{\dirGamma}{{\Gamma^\to}}

\newcommand{\homA}{\hom_{\scA}}

\newcommand{\mA}{\m^{\scA}}
\newcommand{\mB}{\m^{\scB}}
\newcommand{\mC}{\m^{\scC}}

\newcommand{\bfa}{{\boldsymbol{a}}}
\newcommand{\bfb}{{\boldsymbol{b}}}
\newcommand{\bfc}{{\boldsymbol{c}}}
\newcommand{\bfd}{{\boldsymbol{d}}}
\newcommand{\bfe}{{\boldsymbol{e}}}
\newcommand{\bff}{{\boldsymbol{f}}}

\newcommand{\bfY}{{\boldsymbol{Y}}}
\newcommand{\bfP}{{\boldsymbol{P}}}



%
%

%
%

\title{Exact Lefschetz fibrations\\
associated with dimer models}
\author{Masahiro Futaki and Kazushi Ueda}
\date{}
\pagestyle{plain}

%
%

\begin{document}

\maketitle

\begin{abstract}
We associate an exact Lefschetz fibration
with a pair of a consistent dimer model
and an internal perfect matching on it,
whose Fukaya category is derived-equivalent
to the category of representations of the directed quiver
with relations associated with the pair.
As a corollary,
we obtain a version of homological mirror symmetry
for two-dimensional toric Fano stacks.
\end{abstract}

\section{Introduction}

A dimer model is a bicolored graph on a 2-torus
$
 T = \bR^2 / \bZ^2
$
encoding the information of a quiver with relations.
It is originally introduced by string theorists
\cite{Franco-Hanany-Martelli-Sparks-Vegh-Wecht_GTTGBT,
Franco-Hanany-Vegh-Wecht-Kennaway_BDQGT,
Franco-Vegh_MSGTDM,
Hanany-Herzog-Vegh_BTEC,
Hanany-Kennaway_DMTD,
Hanany-Vegh}
and studied by mathematicians
from various points of view
\cite{
Bender-Mozgovoy,
Bocklandt_CYAWQP,
Broomhead,
Davison,
Mozgovoy-Reineke,
Ishii-Ueda_08,
Ishii-Ueda_09,
Ishii-Ueda_DMEC,
Mozgovoy,
Nagao_Nakajima,
Nagao_DCSCY,
Nagao_RONCDT,
Stienstra_Mahler_dimers,
Stienstra_dessins,
Stienstra_Chow,
Stienstra_CPA,
Szendroi_NCDT}.
If a dimer model $G$ is consistent,
then the quiver $\Gamma$ with relations associated with $G$
has the following properties
\cite{
Bocklandt_CYAWQP,
Broomhead,
Davison,
Ishii-Ueda_09,
Ishii-Ueda_DMEC,
Mozgovoy-Reineke}:
\begin{itemize}
 \item
The derived category
$
 D^b \module_0 \bC \Gamma
$
of finitely-generated nilpotent representations of $\Gamma$
is Calabi-Yau in the sense that
the Serre functor is a shift functor.
 \item
The moduli space $\scM_\theta$ of stable representations of $\Gamma$
with dimension vector $(1, \dots, 1)$ and
a generic stability parameter $\theta$
is a smooth toric Calabi-Yau 3-fold.
 \item
There is an equivalence
$$
 D^b \coh \scM_\theta \cong D^b \module \bC \Gamma
$$
of triangulated categories
between the derived category of coherent sheaves
on $\scM_\theta$ and
the derived category of finitely generated
$\bC \Gamma$-modules.
This restricts to an equivalence
$$
 D^b \coh_0 \scM_\theta \cong D^b \module_0 \bC \Gamma
$$
where $D^b \coh_0 \scM_\theta$ is
the derived category of coherent sheaves on $\scM_\theta$
supported at the inverse image of the origin
by the natural morphism
$
 \scM_\theta \to \Spec \Gamma(\scO_{\scM_\theta}).
$
\item
A toric divisor on $\scM_\theta$ corresponds
to a perfect matching $D$ on $G$.
A perfect matching $D$ is said to be {\em internal}
if it corresponds to a compact toric divisor of $\scM_\theta$
for some stability parameter $\theta$.
One can associate a directed subquiver $\dirGamma$ of $\Gamma$
with an internal perfect matching on $G$.
 \item
For any two-dimensional toric Fano stack $X$,
there is a pair $(G, D)$
of a consistent dimer model $G$
and a perfect matching $D$ on $G$
such that there is an equivalence
\begin{equation} \label{eq:dirXGamma}
 D^b \coh X \cong D^b \module \bC \dirGamma
\end{equation}
of triangulated categories.
\end{itemize}

We prove the following in this paper:

\begin{theorem} \label{th:main}
For a pair $(G, D)$ of a consistent dimer model $G$
and a perfect matching $D$ on $G$,
there is an exact Lefschetz fibration
$
 p : \scS \to \bC
$
such that
$$
 D^b \Fuk p \cong D^b \module \bC \dirGamma.
$$
\end{theorem}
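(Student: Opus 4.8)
The plan is to construct the exact Lefschetz fibration $p$ explicitly from the combinatorial data $(G,D)$, and then to compute its directed Fukaya category in the sense of Seidel, showing that the resulting $A_\infty$-category is quasi-equivalent to a natural enhancement of the path algebra $\bC \dirGamma$ of the directed quiver with relations. Since the statement is asserted for an arbitrary pair $(G,D)$ and not only for those arising from a two-dimensional toric Fano stack, I would not attempt to route the argument through the equivalence \eqref{eq:dirXGamma} with $D^b \coh X$; instead the comparison must be made directly between the symplectic and the algebraic sides.

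First I would build the total space $\scS$ and the map $p$. The consistent dimer model $G$ lives on $T = \bR^2/\bZ^2$, and its perfect matchings correspond to lattice points of the relevant Newton polygon; the chosen matching $D$ singles out the distinguished datum relative to which the directed structure $\dirGamma$ is defined. The natural candidate for the regular fiber of $p$ is an exact Riemann surface with boundary associated with $G$ (a thickening of the bicolored graph, equivalently the fiber of the Hori--Vafa-type mirror of the toric stack), so that the faces, edges, and vertices of $G$ are realized as an explicit arrangement of embedded curves. The critical values of $p$ are then placed on a ray in $\bC$ in the order dictated by the directedness coming from $D$, and the vanishing cycles $V_1, \dots, V_n$ are the corresponding Lagrangian circles in the fiber; these will be the objects of $\Fuk p$.

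Next I would read off the Floer theory. In Seidel's formalism the morphism spaces of the directed Fukaya category are $CF(V_i, V_j)$ for $i < j$, the identity line for $i = j$, and zero for $i > j$, so $\Fuk p$ is by construction a directed $A_\infty$-category with the same shape as $\bC \dirGamma$. The core computation is to show that the transverse intersection points of $V_i$ and $V_j$ are in bijection with the arrows of $\dirGamma$ from the $i$-th to the $j$-th vertex, and that the $A_\infty$-products, counting rigid immersed holomorphic polygons with boundary on the $V_k$, reproduce precisely the relations of $\dirGamma$ coming from the superpotential of the dimer model. Consistency of $G$ is exactly what guarantees that the intersection pattern is clean and that the polygon counts are controlled face-by-face by $G$.

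The hard part will be this final matching: verifying that the holomorphic polygon counts coincide with the superpotential relations and that no spurious higher products appear. I would arrange the fiber and the curves so that every contributing polygon is immersed and rigid, reducing the $A_\infty$-structure to a combinatorial count over the faces of $G$; the consistency condition then forces formality of the relevant part of the structure and pins down the relations. Once this quasi-equivalence of $A_\infty$-categories is established on generators, passing to triangulated envelopes---$D^b \Fuk p$ as perfect twisted complexes on the vanishing cycles, and $D^b \module \bC \dirGamma$ as the derived category of the directed algebra---yields the asserted equivalence by standard homological algebra.
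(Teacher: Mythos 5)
Your geometric half does follow the same route as the paper: build a surface from the dimer graph, take the circles dual to the faces as vanishing cycles, and compute the $A_\infty$-structure by counting polygons. But the algebraic half of your plan contains a genuine gap, and it is visible as an internal inconsistency in your own text. You claim that the directed Fukaya category is quasi-equivalent to ``a natural enhancement of the path algebra $\bC \dirGamma$'', while also asserting that the morphism space from $V_i$ to $V_j$ is spanned by the \emph{arrows} between the corresponding vertices. These two statements are incompatible: in $\bC\dirGamma$ the morphism spaces are spanned by \emph{paths} modulo relations, sitting in degree $0$, whereas the Fukaya category of the dimer surface has exactly one generator per arrow of $\Gamma$, sitting in degree $1$ (arrows not in $D$) or degree $2$ (arrows in $D$) once gradings are chosen as in Lemma \ref{lem:grading} --- a step you never address. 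What this geometry actually produces is the directed subcategory $\dirscA$ of the cyclic $A_\infty$-category of the quiver with potential, i.e.\ the Ext-category of the \emph{simple} $\bC\dirGamma$-modules: the vanishing cycles correspond to simples, not to projectives, and $\dirscA$ is Koszul dual to $\bC\dirGamma$, not quasi-isomorphic to it. Consequently your closing move --- ``passing to triangulated envelopes \dots by standard homological algebra'' --- is precisely where the real content is missing. One needs Proposition \ref{prop:directed_A-infinity-quiver}: the minimal model of the dg category of simple $\bC\dirGamma$-modules, computed by homological perturbation theory from the projective resolutions of the simples (or via Keller's theorem for directed algebras), is exactly $\dirscA$; combined with Bondal--Kapranov generation (the simples generate $D^b \module \bC\dirGamma$ since the algebra is directed), this gives $D^b \dirscA \cong D^b \module \bC \dirGamma$. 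Without this Koszul-duality step, the asserted equivalence does not follow from your construction.

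Within the geometric half, the difficulties you flag as ``the hard part'' are also mislocated. On a surface there are no spurious higher products to exclude --- the contributing polygons are the disks at the nodes of $G$, and no appeal to consistency is needed for that --- but two genuine issues remain. First, gradings: one must grade $M$ and the $C_v$ so that every intersection point has Maslov index $1$ or $2$ according to whether the edge lies outside or inside $D$; this is where internality of $D$ (equivalently, that $D$ comes from a total order, Lemma \ref{lm:int_pm_directing}) enters. Second, signs: the products around white and black nodes must come out with opposite signs, and the paper proves this (Lemma \ref{lem:sign}) by induction over the structure theorem that every consistent dimer model is obtained from the simplest one $G_0$ by adding edges and divalent nodes, adjusting base points (spin structures) on the vanishing cycles at each step; your proposal offers no mechanism for this. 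Further details your sketch elides: the fiber is not the naive thickening of $G \subset T$ but the ribbon surface with cyclic orders \emph{reversed} at the black nodes (Lemma \ref{lm:M}); the circles must be made exact Lagrangians by a perturbation argument using the punctures (Lemma \ref{lem:exact}); and the fibration is obtained abstractly from Seidel's realization lemma for framed exact Lagrangian spheres, not by writing down a Hori--Vafa potential with critical values on a ray.
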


By combining Theorem \ref{th:main} with
\eqref{eq:dirXGamma},
one obtains an equivalence
$$
 D^b \Fuk p \cong D^b \coh X
$$
of triangulated categories,
which is a version of homological mirror symmetry
for two-dimensional toric Fano stacks.
Homological mirror symmetry is proposed by Kontsevich,
originally for Calabi-Yau manifolds \cite{Kontsevich_HAMS} and
later generalized to Fano manifolds \cite{Kontsevich_ENS98}.
The definition of the Fukaya category of a Lefschetz fibration
is due to Seidel \cite{Seidel_VC, Seidel_PL}.
Homological mirror symmetry for Fano manifolds is studied in,
e.g.
\cite{
Abouzaid_HCRMSTV,
Abouzaid_MHTGHMSTV,
Auroux-Katzarkov-Orlov_WPP,
Auroux-Katzarkov-Orlov_dP,
Fang_HMSTP,
Fang-Liu-Treumann-Zaslow_CMT,
Fang-Liu-Treumann-Zaslow_CCC,
Fang-Liu-Treumann-Zaslow_CCTO,
Kerr_WBMSTS,
Seidel_VC2,
Ueda_HMSTdPS}.
The relation between dimer models and
homological mirror symmetry is discovered
in \cite{Feng-He-Kennaway-Vafa} and
followed up in
\cite{Ueda-Yamazaki_NBTMQ,
Ueda-Yamazaki_BTP,
Ueda-Yamazaki_toricdP}.

Theorem \ref{th:main} implies
homological mirror symmetry for toric Calabi-Yau 3-folds
just as in \cite[Theorem 1.1]{Seidel_suspension}:

\begin{corollary} \label{cor:local_hms}
For a smooth toric Calabi-Yau 3-fold $K$
with a compact toric divisor,
there is an exact symplectic manifold $H$
and a full embedding of triangulated categories
$$
 D^b \coh_0 K \hookrightarrow D^b \Fuk H.
$$
\end{corollary}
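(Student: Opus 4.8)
The plan is to assemble the identifications recorded in the introduction and then invoke Seidel's suspension construction. First I would realize $K$ through the dimer-model dictionary: since $K$ is a smooth toric Calabi-Yau 3-fold carrying a compact toric divisor, its toric diagram is a lattice polygon with an interior lattice point, so $K$ is isomorphic to a moduli space $\scM_\theta$ attached to a consistent dimer model $G$, and the compact toric divisor singles out an internal perfect matching $D$ on $G$. For the pair $(G, D)$ I then have the quiver with relations $\Gamma$, its directed subquiver $\dirGamma$, and the equivalence $D^b \coh_0 K \cong D^b \module_0 \bC \Gamma$ obtained by restricting the global equivalence $D^b \coh \scM_\theta \cong D^b \module \bC \Gamma$ to objects supported over the origin.

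Next I would exploit the algebraic relation between the two quivers. Passing from $\Gamma$ to $\dirGamma$ deletes the arrows lying in $D$; conversely $\bC \Gamma$ is recovered from the finite-dimensional directed algebra $\bC \dirGamma$ as its $3$-Calabi-Yau (Ginzburg) completion, and this completion does not depend on which internal perfect matching was used to cut the potential. Hence the Calabi-Yau category $D^b \module_0 \bC \Gamma$ of nilpotent representations is identified with the derived category of finite-dimensional modules over the $3$-Calabi-Yau completion of the directed algebra $\bC \dirGamma$ furnished by Theorem \ref{th:main}.

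Theorem \ref{th:main} then supplies an exact Lefschetz fibration $p : \scS \to \bC$ with $D^b \Fuk p \cong D^b \module \bC \dirGamma$, so that the vanishing cycles of $p$ realize $\bC \dirGamma$ as the endomorphism algebra of an exceptional collection. I would form the suspension $H$ of $p$ in the sense of Seidel, an exact symplectic manifold of complex dimension one greater than that of $\scS$, hence of complex dimension $3$, matching $K$. Seidel's theorem identifies, on the level of Fukaya categories, the $3$-Calabi-Yau completion of the vanishing-cycle algebra with the subcategory of $D^b \Fuk H$ generated by the suspended Lagrangian spheres, yielding a full embedding $D^b \module_0 \bC \Gamma \hookrightarrow D^b \Fuk H$. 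Composing this with the equivalence of the first step produces the desired $D^b \coh_0 K \hookrightarrow D^b \Fuk H$, exactly as in \cite[Theorem 1.1]{Seidel_suspension}.

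I expect the main obstacle to be the suspension step itself. One must check that Seidel's construction applies to the particular fibration $p$ produced by Theorem \ref{th:main}, that the resulting $H$ is a well-defined exact symplectic manifold with finiteness and support conditions matching those defining $D^b \coh_0 K$, and—most delicately—that the geometric suspension realizes the $3$-Calabi-Yau completion at the level of $A_\infty$-structures, so that the algebraic identification of the second paragraph is compatible with the induced functor on Fukaya categories. Once this compatibility is established, the remaining steps are either direct citations or formal.
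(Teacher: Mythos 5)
Your overall architecture---the dimer dictionary realizing $K$ as a moduli space $\scM_\theta$ with $D^b \coh_0 K \cong D^b \module_0 \bC\Gamma$, Theorem \ref{th:main} producing the fibration $p$, and Seidel's suspension construction producing a $3$-dimensional exact symplectic manifold $H$---is the same as the paper's. But the algebraic mechanism you assign to the suspension step is wrong, and that is where the content lies. Seidel's theorem (Corollary 5.5 of the suspension paper, as quoted in Section \ref{sc:lefschetz}) says that the full subcategory of $\Fuk\lb(\pss)^{-1}(0)\rb$ on the double-suspended vanishing cycles $L_v$ is the \emph{trivial extension of $\Fuk p$ of degree 3}, i.e.\ the cyclic completion $\dirscA \oplus \dirscA^\vee[-3]$, a proper (finite-dimensional) $A_\infty$-category---\emph{not} the Ginzburg $3$-Calabi-Yau completion, which is a smooth, infinite-dimensional dg algebra. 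The two cannot be conflated: the $L_v$ are compact Lagrangian $3$-spheres, so $\hom(L_v, L_v) \cong H^*(S^3)$ is finite-dimensional, and the subcategory they span can never realize (perfect modules over) the smooth Calabi-Yau completion. Thus your anticipated ``main obstacle''---verifying that the geometric suspension realizes the $3$-Calabi-Yau completion---is not a gap to be filled but a false statement to be replaced.

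The repair is exactly the step your proposal omits: the Koszul duality between the two completions, which in the paper is Proposition \ref{prop:quiver-A-infinity}. The paper's chain is $D^b \coh_0 K \cong D^b \module_0 \bC\Gamma \cong D^b \scA$ (Proposition \ref{prop:quiver-A-infinity}, proved via homological perturbation theory or the Lef\`{e}vre-Hasegawa/Keller duality), combined with the purely algebraic fact that $\scA$ \emph{is} the trivial extension of $\dirscA$ of degree 3; Seidel's theorem then matches this trivial extension with the full subcategory of $\Fuk H$, $H = (\pss)^{-1}(0)$, on the spheres $L_v$, giving the embedding $D^b \scA \hookrightarrow D^b \Fuk H$. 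Your middle paragraph is also inaccurate as stated: $\bC\Gamma$ is not the plain $3$-Calabi-Yau completion of $\bC\dirGamma$ but the \emph{deformed} completion, with deformation class determined by the potential (Keller); and even granting that, identifying $D^b \module_0 \bC\Gamma$ with finite-dimensional modules over that completion still leaves you needing precisely the duality between this category and $D^b$ of the trivial extension in order to connect with what Seidel's theorem actually produces. So the skeleton is right, but the two load-bearing identifications---Proposition \ref{prop:quiver-A-infinity} and the trivial-extension statement---are respectively missing and misstated.
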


The organization of this paper is as follows:
In Section \ref{sc:A-inf_dimer},
we recall basic definitions on dimer models and
$A_\infty$-categories,
and introduce the $A_\infty$-category
associated with a dimer model.
In Section \ref{sc:directed_subcategory},
we study the subcategory of $\scA_G$
associated with an internal perfect matching,
and discuss its relation with the derived category
of modules over the path algebra.
In Section \ref{sc:lefschetz},
we construct an exact Lefschetz fibration
from a dimer model and prove
Theorem \ref{th:main} and Corollary \ref{cor:local_hms}.

{\em Acknowledgment}:
We thank Bernhard Keller for several remarks
which led to a major revision of this paper.
We also thank Alastair Craw for pointing out the reference
\cite{Carqueville-Quintero_Velez},
and Nils Carqueville and Alexander Quintero Velez
for sending an earlier version of
\cite{Carqueville-Quintero_Velez}.
K.~U. thanks Akira Ishii for helpful discussions.
M.~F. is supported by Grant-in-Aid for Young Scientists (No.19.8083).
K.~U. is supported by Grant-in-Aid for Young Scientists (No.18840029).
This work has been done while K.~U. is visiting the University of Oxford,
and he thanks the Mathematical Institute for hospitality
and Engineering and Physical Sciences Research Council
for financial support.

\section{An $A_\infty$-category from a dimer model}
 \label{sc:A-inf_dimer}

We first recall basic definitions on dimer models:
\begin{itemize}
 \item
A {\em dimer model} is a bicolored graph
$G = (B, W, E)$ on an oriented 2-torus $T = \bR^2 / \bZ^2$
which divides $T$ into polygons.
Here $B$ is the set of black nodes,
$W$ is the set of white nodes,
and $E$ is the set of edges.
No edge is allowed to connect nodes
with the same color.
 \item
A {\em quiver} consists of
a finite set $V$ called the set of vertices,
another finite set $A$ called the set of arrows,
and two maps $s, t : A \to V$ called the
source and the target map.
The quiver $Q = (V, A, s, t)$
associated with $G$ is defined
as the dual graph of $G$,
equipped with the orientation so that
the white node is always on the right of an arrow;
the set $V$ of vertices is the set of faces of $G$,
and the set $A$ of arrows can naturally be identified
with the set $E$ of edges of $G$.
 \item
For a cyclic path $p = (a_n, \dots, a_1)$
and an arrow $b$ on a quiver $Q$,
the {\em derivative} of $p$ by $b$ is defined by
$$
 \frac{\partial p}{\partial b}
  = \sum_{i=1}^n \delta_{a_i, b}
     (a_{i-1}, a_{i-2}, \dots, a_1, a_n, a_{n-1}, \dots, a_{i+1}),
$$
where
$$
 \delta_{a, b}
  = \begin{cases}
      1 & a = b, \\
      0 & \text{otherwise}.
    \end{cases}
$$
A {\em potential} $\Phi$ is a linear combination
of cyclic paths on the quiver.
The derivative of $\Phi$ is defined by linearity,
and the two-sided ideal $\scI = (\partial \Phi)$
generated by the derivatives $\partial \Phi / \partial a$
for all arrows $a$ gives relations of the quiver.
 \item
The potential $\Phi$ of the quiver $Q$ 
associated with a dimer model is defined by
$$
 \Phi = \sum_{w \in W} c(w) - \sum_{b \in B} c(b),
$$
where $c(n)$ for $n \in B \sqcup W$
is the minimal cyclic path around $n$.
 \item
A {\em perfect matching} is a subset $D \subset E$
such that for any $n \in B \sqcup W$,
there is a unique edge $e \in D$ adjacent to $n$.
A dimer model is {\em non-degenerate}
if for any edge $e \in E$,
there is a perfect matching $D$ such that $e \in D$.
 \item
Two paths $p$ and $q$ are said to be {\em weakly equivalent}
if $p r$ is equivalent to $q r$
for some other path $r$.
A non-degenerate dimer model is said to be {\em consistent}
if weakly equivalent paths are equivalent.
\end{itemize}

Next we recall the difinition
of an $A_\infty$-category.
For a $\bZ$-graded vector space
$N = \bigoplus_{j \in \bZ} N^j$ and an integer $i$,
the $i$-th shift of $N$ to the left
will be denoted by $N[i]$;
$(N[i])^j  = N^{i+j}$.
An $A_\infty$-category $\scA$ consists of
\begin{itemize}
 \item the set $\Ob(\scA)$ of objects,
 \item for $c_1,\; c_2 \in \Ob(\scA)$,
       a $\bZ$-graded vector space $\hom_\scA(c_1, c_2)$
       called the space of morphisms, and
 \item operations
$$\
 \m_l : \hom_\scA (c_{l-1},c_l) \otimes \dots
          \otimes \hom_\scA (c_0,c_1)
 \longrightarrow \hom_\scA (c_0,c_l)
$$
of degree $2 - l$ for $l=1,2,\ldots$
and $c_i \in \Ob(\scA)$,
$i=0,\dots, l$,
satisfying
the {\em $A_\infty$-relations}
\begin{eqnarray}
 \sum_{i=0}^{l-1} \sum_{j=i+1}^l
  (-1)^{\deg a_1 + \cdots + \deg a_i - i}
  \m_{l+i-j+1}(a_l \otimes \cdots \otimes a_{j+1}
   \otimes 
    \m_{j-i}(a_j \otimes \cdots \otimes a_{i+1})
     \nonumber \\
   \otimes
    a_i \otimes \cdots \otimes a_1 ) = 0,
  \label{eq:A_infty}
\end{eqnarray}
for any positive integer $l$,
any sequence $c_0, \dots, c_l$ of objects of $\scA$,
and any sequence of morphisms
$a_i \in \hom_{\scA}(c_{i-1}, c_i)$
for $i = 1, \dots, l$.
\end{itemize}
A {\em cyclic $A_\infty$-category}
of dimension $d \in \bZ$ is a pair $(\scA, \la \bullet, \bullet \ra)$
of an $A_\infty$-category
and a non-degenerate pairing
$$
 \la \bullet, \bullet \ra :
  \hom(c_2, c_1) \otimes \hom(c_1, c_2) \to \bC[d]
$$
which is both symmetric
$$
 \la x, y \ra + (-1)^{(\deg x - 1)(\deg y - 1)} \la y, x \ra = 0
$$
and cyclic
$$
 \la \m_n (x_n, \dots, x_1), x_{0} \ra
  = (-1)^{(\deg x_n - 1)(\deg x_{n-1} + \cdots + \deg x_0 - n)}
    \la \m_{n-1}(x_{n-1}, \dots, x_0), x_n \ra.
$$

As shown in \cite[Section 8.1]{Kontsevich-Soibelman_SSMDTICT},
one can associate a cyclic $A_\infty$-category of dimension three
with any quiver with potential.
By applying their construction to the quiver with potential
associated with a dimer model,
one obtains the following cyclic $A_\infty$-category:

\begin{definition}
Let $G = (B, W, E)$ be a dimer model
and $\Gamma = (V, A, s, t, \scI)$ be the quiver with relations
associated with $G$.
Then the $A_\infty$-category $\scA$
associated with $G$
is defined as follows:
\begin{itemize}
 \item
The set of objects is the set $V$ of vertices of $\Gamma$.
 \item
For two objects $v$ and $w$ in $\scA$,
the space of morphisms is given by
$$
 \homA^i(v, w) =
  \begin{cases}
   \bC \cdot \id_v & i = 0 \text{ and } v = w, \\
   \vspan \{ a \mid a : w \to v \} & i = 1, \\
   \vspan \{ a^\vee \mid a : v \to w \} & i = 2, \\
  \bC \cdot \id_v^\vee & i = 3 \text{ and } v = w, \\
   0 & \text{otherwise}.
  \end{cases}
$$
 \item
Non-zero $A_\infty$-operations are
$$
 \mA_2(x, \id_v) = \mA_2(\id_w, x) = x
$$
for any $x \in \homA(v, w)$,
$$
 \mA_2(a, a^\vee) = \id_v^\vee
$$
and
$$
 \mA_2(a^\vee, a) = \id_w^\vee
$$
for any arrow $a$ from $v$ to $w$,
$$
 \mA_k(a_1, \dots, a_k) = a_0.
$$
for any cycle $(a_0, \dots, a_k)$ of the quiver
going around a white node, and
$$
 \mA_k(a_1, \dots, a_k) = - a_0.
$$
for any cycle $(a_0, \dots, a_k)$ of the quiver
going around a black node.
 \item
The pairing
$$
 \la \bullet, \bullet \ra :
  \homA(w, v) \otimes \homA(v, w) \to \bC[3]
$$
is defined by
$$
 \la a^\vee, a \ra
  = \la \id_v^\vee, \id_v \ra
  = 1
$$
and zero otherwise.
\end{itemize}
\end{definition}

If a dimer model $G$ is consistent,
then the derived category $D^b \scA$
of the $A_\infty$-category $\scA$
associated with $G$ is equivalent
to the derived category $D^b \module_0 \bC \Gamma$
of nilpotent representations of $\Gamma$:

\begin{proposition} \label{prop:quiver-A-infinity}
For a dimer model $G$,
let $\scA$ be the cyclic $A_\infty$-category
associated with $G$ and
$\Gamma$ be the quiver with relations
associated with $G$.
If $G$ is consistent,
then there is an equivalence
\begin{equation} \label{eq:equiv_A-Gamma}
 D^b \scA \cong D^b \module_0 \bC \Gamma
\end{equation}
of triangulated categories.
\end{proposition}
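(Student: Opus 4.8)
The plan is to identify $\scA$ with the $A_\infty$-Yoneda algebra of the simple modules over $\bC \Gamma$ and then to apply Keller's theorem on the derived category of an $A_\infty$-category. For each vertex $v \in V$, let $S_v$ denote the one-dimensional nilpotent representation of $\Gamma$ supported at $v$, and set $S = \bigoplus_{v \in V} S_v$. I would send the object $v$ of $\scA$ to $S_v$, and build the equivalence \eqref{eq:equiv_A-Gamma} from an $A_\infty$-quasi-isomorphism between $\scA$ and the endomorphism $A_\infty$-algebra $\RHom_{\bC\Gamma}(S, S)$, under the identification $\homA(v, w) \cong \Ext^{\bullet}_{\bC\Gamma}(S_w, S_v)$.

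First I would compute the underlying graded spaces. Because $G$ is consistent, the Jacobian algebra $\bC\Gamma$ is $3$-Calabi-Yau, and each simple $S_v$ admits the standard Ginzburg-type resolution
$$
0 \to P_v \to \bigoplus_{b\,:\,w \to v} P_w \to \bigoplus_{a\,:\,v \to w} P_w \to P_v \to S_v \to 0,
$$
where the rightmost differential is assembled from the arrows out of $v$, the middle one from the Jacobian relations $\partial \Phi / \partial a$, and the leftmost one is forced by Calabi-Yau duality. Applying $\Hom_{\bC\Gamma}(-, S_w)$ and taking cohomology gives $\Ext^0 = \bC \cdot \id_v$ and $\Ext^3 = \bC \cdot \id_v^{\vee}$ when $v = w$, with $\Ext^1$ spanned by the arrows $w \to v$ and $\Ext^2$ by the duals of the arrows $v \to w$. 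This matches the graded pieces of $\homA$ exactly. Consistency enters precisely here: it guarantees that the displayed complex is a genuine resolution of length three, so that no spurious $\Ext$-classes appear and the $\Ext$-algebra is neither larger nor smaller than $\scA$.

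Next I would pin down the $A_\infty$-structure by homotopy transfer (Kadeishvili / Merkulov) onto the minimal model $\Ext^{\bullet}_{\bC\Gamma}(S, S)$. The Yoneda products recover $\mA_2(a, a^{\vee}) = \id_v^{\vee}$ and $\mA_2(a^{\vee}, a) = \id_w^{\vee}$, while the transferred higher products read off the coefficients of the potential $\Phi = \sum_{w} c(w) - \sum_{b} c(b)$: a length-$(k+1)$ cyclic monomial around a white node $w$ yields $\mA_k(a_1, \dots, a_k) = a_0$, and one around a black node yields $\mA_k(a_1, \dots, a_k) = -a_0$, which is exactly the Kontsevich--Soibelman prescription recalled above. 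That this transferred structure is cyclic of dimension three, i.e.\ that the pairing $\la \bullet, \bullet \ra$ is nondegenerate, is again a manifestation of the $3$-Calabi-Yau property furnished by consistency.

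Finally I would conclude by a tilting argument. Every finite-dimensional nilpotent representation of $\Gamma$ is a finite iterated extension of the $S_v$, so the simples generate $D^b \module_0 \bC\Gamma$ as a triangulated category and $S$ is a compact generator whose endomorphism $A_\infty$-algebra is $\scA$. By Keller's theorem, $D^b \scA$ is the thick subcategory generated by the representable modules, which the assignment $v \mapsto S_v$ carries onto the thick subcategory generated by the $S_v$, namely all of $D^b \module_0 \bC\Gamma$; this yields \eqref{eq:equiv_A-Gamma}. The main obstacle is the middle step: checking that the transferred higher products are exactly the $\mA_k$ dictated by the cycles around black and white nodes, with the correct signs, and that it is consistency which forces the resolution above to take precisely the claimed shape.
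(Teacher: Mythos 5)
Your proposal follows essentially the same route as the paper's own proof: the paper also passes through the dg category $\scC$ of simple modules (your $\RHom_{\bC\Gamma}(S,S)$), identifies $D^b \scC$ with $D^b \module_0 \bC \Gamma$ by Bondal--Kapranov generation by the simples, and obtains the quasi-equivalence $\scC \cong \scA$ by homological perturbation theory applied to the Mozgovoy--Reineke length-three resolution, with the transferred higher products matched cycle-by-cycle (with signs) to the potential, exactly as you outline. The only differences are cosmetic conventions (direction of arrows in the resolution, variance of the identification $\homA(v,w)\cong\Ext^\bullet$), plus the paper's optional alternative argument via Koszul duality and Ginzburg's dg algebra, which your approach does not need.
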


\noindent
{\em Sketch of proof.}
Let $\scC$ be the full subcategory
of the differential graded enhancement of $D^b \module \bC \Gamma$
consisting of simple modules.
Since $D^b \module_0 \bC \Gamma$ is
the smallest triangulated subcategory of $D^b \module \bC \Gamma$
containing simple modules,
one has an equivalence
$$
 D^b \scC \cong D^b \module_0 \bC \Gamma
$$
by Bondal and Kapranov
\cite[\S 4, Theorem 1]{Bondal-Kapranov_ETC}.
Hence the equivalence \eqref{eq:equiv_A-Gamma}
follows from the quasi-equivalence
$$
 \scC \cong \scA
$$
of $A_\infty$-categories,
which can be shown directly
using homological perturbation theory
\cite{Kadeishvili_ASHAA, Gugenheim-Lambe-Stasheff,
Merkulov_SHAKM, Kontsevich-Soibelman_HMSTF},
or deduced from the case of a directed subcategory
discussed in Section \ref{sc:directed_subcategory}
by noting that $\scA$ is the trivial extension
of its directed subcategory
\cite{Carqueville-Quintero_Velez}.

An alternative approach,
suggested to the authors by Bernhard Keller,
is to use Koszul duality for $A_\infty$-categories
developed by Lef\`{e}vre-Hasegawa \cite{Lefevre-Hasegawa_SAC}
and summarized in \cite{Keller_KDCC}:
As $\scA$ is augmented over the product of copies of $\bC$
indexed by the vertices of $Q$,
the bar construction
$$
 C
  = B \scA
  = \bigoplus_{n = 0}^\infty \scA^{\otimes n}
$$
equipped with the co-differential $\delta : C \to C$
defined by
\begin{eqnarray*}
 \delta(a_l \otimes \cdots \otimes a_1)
  = \sum_{i=0}^{l-1} \sum_{j=i+1}^l
     (-1)^{\deg a_1 + \cdots + \deg a_i - i}
      a_l \otimes \cdots \otimes a_{j+1} \phantom{abcdedf} \\
       \otimes 
      \m_{j-i}(a_j \otimes \cdots \otimes a_{i+1})
       \otimes
      a_i \otimes \cdots \otimes a_1,
\end{eqnarray*}
is a co-augmented differential graded coalgebra,
and each $A_\infty$-module $\scM$ over $\scA$
yields a differential graded comodule
$$
 B \scM
  = \bigoplus_{n = 0}^\infty \scM \otimes \scA^{\otimes n}
$$
over $C$.
The functor $\scM \mapsto B \scM$
induces an equivalence
$$
 D^b \scA \to D_0 \, C
$$
from the bounded derived category of $\scA$
to the full triangulated subcategory $D_0 \, C$
of the coderived category $D C$ of $C$
generated by differential graded comodules
coming from the co-augmentation.

Now let $G$ be the completion
of the differential graded algebra
associated with the quiver $(Q, \Phi)$ with potential
by Ginzburg \cite[Section 1.4]{Ginzburg_CYA}.
Then $G$ is the $\bC$-dual of $C$
as observed in \cite[Section 5.3]{Keller_CYTC},
and $\bC$-duality transforms each differential graded $C$-comodule
into a differential graded $G$-module.
This induces a contravariant equivalence
$$
 \Hom_\bC(\bullet, \bC) : D_0 \, C \to D_0(G)
$$
from $D_0 C$ to the full triangulated subcategory $D_0(G)$
of the derived category $D(G)$ of $G$
containing simple modules.
If the dimer model is consistent,
then $\bC \Gamma$ is a Calabi-Yau 3 algebra and
one has a quasi-isomorphism
$
 \bC \Gamma \simto G
$
by \cite[Theorem 5.3.1]{Ginzburg_CYA},
so that $D_0(G)$ is equivalent to $D^b \module_0 \bC \Gamma$.
One can obtain a covariant equivalence
instead of a contravariant one
by composing with the duality functor
$
 \bR \Hom_G(\bullet, G) : D_0(G) \to D_0(G).
$
\qed

\section{The directed subcategory}
 \label{sc:directed_subcategory}

Let $G = (B, W, E)$ be a dimer model and
$Q = (V, A, s, t)$ be the quiver associated with $G$.
A perfect matching $D$ defines a subquiver
$$
 Q_D = (V, A_D, s_D, t_D)
$$
of $Q$
with the same set of vertices as $Q$ and the set
$$
 A_D = A \setminus D
$$
of arrows consisting of those not in $D \subset E = A$.
The path algebra $\bC Q_D$
is naturally a subalgebra of $\bC Q$,
and the intersection $I_D = \bC Q_D \cap \scI$
of the ideal $\scI$ of relations on $Q$
with $\bC Q_D$ gives an ideal of relations on $Q_D$.
We write the resulting quiver with relations as
$\Gamma_D = (Q_D, \scI_D)$.

Let $\scM_\theta$ be the moduli space of
stable representations of $\Gamma$
with respect to a stability parameter $\theta$
in the sense of King \cite{King}.
If $G$ is non-degenerate,
then $\scM_\theta$ is a smooth toric variety
for a generic $\theta$,
and for any toric divisor in $\scM_\theta$,
there is a perfect matching $D$
such that the divisor is defined as the zero locus
of the arrows dual to edges in $D$
\cite{Ishii-Ueda_08}.
In addition, for any perfect matching $D$,
there is a generic stability parameter $\theta$
such that $D$ corresponds to a toric divisor in $\scM_\theta$
in this way.

A perfect matching $D$ is said to be {\em internal}
if the toric divisor in $\scM_\theta$
corresponding to $D$
for some $\theta$
is compact.
It is easy to see that
a perfect matching $D$ is internal
if and only if $Q_D$ does not have an oriented cycle.


For a total order $>$ on the set $V$ of vertices of $Q$,
the {\em directed subquiver} $\dirQ$ is defined
as the subquiver of $Q$
whose set of vertices is the same as $Q$
and whose set of arrows $A^\to$ is given by
$$
 \dirA = \lc a \in A \mid s(a) < t(a) \rc.
$$
The subquiver $\dirQ$ equipped with
the relations $\dirscI = \bC \dirQ \cap \scI$
will be denoted by $\dirGamma$.
A perfect matching $D$ is said to come from a total order $>$
if $\Gamma_D = \dirGamma$.

\begin{lemma} \label{lm:int_pm_directing}
A perfect matching $D$ of a dimer model $G$ is internal
if and only if it comes from a total order $<$
on the set of faces of $G$.
\end{lemma}

\begin{proof}
It is clear that a perfect matching
coming from a total order on the set of faces is internal.
To show the converse,
assume that a perfect matching $D$ is internal.
The non-existence of oriented cycles ensures
that this quiver defines a partial order
on the set of vertices of the quiver.
Choose any total order $<$ compatible with this partial order.
Then the condition that $D$ is a perfect matching
implies that the arrows contained in $D$ is precisely
the arrows $a$ such that $s(a) > t(a)$;
the path $p(a)$ from $t(a)$ to $s(a)$ which goes
around either node adjacent to the edge dual to $a$
is contained in the subquiver and
induces the order $s(a) > t(a)$.
\end{proof}

For an $A_\infty$-category $\scA$ and a sequence
$\bfY = (Y_1, \dots, Y_m)$ of objects,
the associated {\em directed subcategory}
$\dirscA = \scA(\bfY)$
is the $A_\infty$-subcategory
consisting of $\bfY$
such that the spaces of morphisms are given by
$$
 \hom_{\dirscA} (Y_i, Y_j) =
 \begin{cases}
   \bC \cdot \id_{Y_i} & i = j, \\
   \hom_{\scA} (Y_i, Y_j) & i < j, \\
   0 & \text{otherwise},
 \end{cases}
$$
with the $A_\infty$-operations inherited from $\scA$.
If $\scA$ comes from a dimer model $G$
as in Section \ref{sc:A-inf_dimer} and
$<$ is a total order on the set of vertices of the quiver
associated with $G$,
then we set $\bfY = (Y_1, \dots, Y_m)$ to be
the set of objects of $\scA$,
arranged in the order
inverse to the one induced by $<$.

\begin{proposition} \label{prop:directed_A-infinity-quiver}
Let $G$ be a consistent dimer model and
$>$ be a total order on the set of faces of $G$
giving an internal perfect matching.
Then one has an equivalence
$$
 D^b \dirscA \cong D^b \module \bC \dirGamma
$$
of triangulated categories.
\end{proposition}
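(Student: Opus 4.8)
The plan is to adapt the strategy used for Proposition \ref{prop:quiver-A-infinity} to the directed setting. Since $D$ is internal, Lemma \ref{lm:int_pm_directing} shows that $\dirQ$ has no oriented cycle, so $\bC \dirGamma$ is finite-dimensional; consequently every module is an iterated extension of simples, and the simple modules $S_v$ indexed by the vertices $v$ of $\dirQ$ generate $D^b \module \bC \dirGamma$ as a triangulated category (here there is no need to pass to a nilpotent subcategory, in contrast to the Calabi-Yau case, because the arrow ideal of $\bC \dirGamma$ is already nilpotent). Let $\dirscC$ be the full subcategory on the objects $\{ S_v \}$ of the differential graded enhancement of $D^b \module \bC \dirGamma$. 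By Bondal and Kapranov \cite[\S 4, Theorem 1]{Bondal-Kapranov_ETC} one then has $D^b \dirscC \cong D^b \module \bC \dirGamma$, so the proposition reduces to producing a quasi-equivalence $\dirscC \cong \dirscA$. Passing to a minimal model, the morphism spaces of $\dirscC$ become the graded $\Ext$-groups $\Ext^\bullet_{\bC \dirGamma}(S_v, S_w)$ with their induced $A_\infty$-structure.

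I would first match the underlying graded morphism spaces. In degree $0$ both sides reduce to $\bC \cdot \id$. In degree $1$ the minimal projective resolution of the simples gives $\Ext^1_{\bC \dirGamma}(S_{t(a)}, S_{s(a)}) = \bC \cdot [a]$ for each arrow $a$ of $\dirQ$, which matches the degree-$1$ morphisms of $\dirscA$ since $\homA^1(Y_i, Y_j) = \vspan \{ a \mid a : Y_j \to Y_i \}$. In degree $2$ the key combinatorial observation is that the minimal relations of $\dirGamma$ are in bijection with the arrows $a \in D$: because $D$ is a perfect matching, the unique $D$-edge at each of the two nodes adjacent to $a$ is $a$ itself, so both paths $p_w$ and $p_b$ in $\partial \Phi / \partial a = p_w - p_b$ avoid $D$ and hence lie in $\bC \dirQ$; conversely, for $a \notin D$ both $p_w$ and $p_b$ contain a $D$-arrow and so do not survive the intersection $\dirscI = \bC \dirQ \cap \scI$. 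Consistency of $G$ guarantees that these relations generate all of $\dirscI$ and are minimal, so $\Ext^2_{\bC \dirGamma}(S_{s(a)}, S_{t(a)})$ is spanned by $[\partial \Phi / \partial a]$, matching the degree-$2$ generator $a^\vee$ of $\dirscA$ for each $a \in D$. Finally one must show $\Ext^{\geq 3}_{\bC \dirGamma}(S_v, S_w) = 0$, i.e.\ that $\bC \dirGamma$ has global dimension at most $2$; this reflects the fact that passing to the directed subquiver removes the third Calabi-Yau direction, and I expect it to follow from consistency together with the explicit $0$-$1$-$2$ projective resolution of the simples.

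It then remains to match the $A_\infty$-operations. On $\dirscC$ these are the Massey products on $\Ext^\bullet_{\bC \dirGamma}(S, S)$ computed by homological perturbation theory \cite{Kadeishvili_ASHAA, Merkulov_SHAKM, Kontsevich-Soibelman_HMSTF} from a choice of minimal resolutions of the $S_v$. Under the dictionary above, a higher product $\mA_k$ sending degree-$1$ classes $[a_1], \dots, [a_k]$ to a degree-$2$ class records exactly which monomials $a_1 \cdots a_k$ occur in a relation $\partial \Phi / \partial a_0$; since each such relation is the difference $p_w - p_b$ of the two paths around the white and the black node adjacent to the unique $D$-edge $a_0$, the nonzero products are precisely $\mA_k([a_1], \dots, [a_k]) = \pm [\partial \Phi / \partial a_0] = \pm a_0^\vee$ along those two paths, with sign $+$ around a white node and $-$ around a black node. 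This reproduces the node-cycle operations defining $\dirscA$.

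I expect the main obstacle to be this last step: carrying out the homological perturbation computation with a well-chosen resolution so that the Massey products come out exactly as the node-cycle products of $\dirscA$, with correct signs and with no spurious higher products, together with the vanishing $\Ext^{\geq 3} = 0$. Both rely essentially on the consistency of $G$, which forces the relations coming from the potential to be complete and makes the directed algebra behave like the endomorphism algebra of a full exceptional collection on a surface. An alternative, cleaner route would be to invoke the $A_\infty$-Koszul duality of Lef\`{e}vre-Hasegawa \cite{Lefevre-Hasegawa_SAC} and Keller \cite{Keller_KDCC}, exactly as in the second proof of Proposition \ref{prop:quiver-A-infinity}: the bounded $\Ext$-grading of $\bC \dirGamma$ makes $\dirscA$ and $\bC \dirGamma$ Koszul dual, and the duality functor yields the desired equivalence of derived categories directly, sidestepping the explicit chain-level verification.
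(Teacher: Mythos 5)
Your proposal matches the paper's own route: the paper proves the proposition exactly by combining the generation of $D^b \module \bC \dirGamma$ by simple modules (Bondal--Kapranov) with a quasi-equivalence $\dirscA \cong \dirscC$, where $\dirscC$ is the full dg subcategory on the simples, established by the homological perturbation computation of the minimal model (projective resolutions of length two giving the $\Ext$-spaces and the node-cycle products with signs), and it likewise notes the abstract alternative via Keller's results. The details you flag as ``expected'' (vanishing of $\Ext^{\geq 3}$, completeness of the relations indexed by $D$, the Massey-product computation) are precisely what the paper's explicit resolution of the simples supplies, so your outline is correct and essentially identical in strategy.
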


Proposition \ref{prop:directed_A-infinity-quiver}
comes from a quasi-equivalence
$$
 \dirscA \cong \dirscC
$$
with the the full subcategory $\dirscC$
of the differential graded enhancement of
$
 D^b \module \bC \dirGamma
$
consisting of simple modules.
This is an easy exercise in homological perturbation theory,
and can also be deduced from \cite[Proposition 2]{Keller_AART}
as discussed in 
\cite{Carqueville-Quintero_Velez}.

\section{An exact Lefschetz fibration from a dimer model}
 \label{sc:lefschetz}

We prove the following in this section,
which together with Proposition \ref{prop:directed_A-infinity-quiver}
immediately implies Theorem \ref{th:main}:

\begin{theorem} \label{th:fuk-A-infinity}
Let $G$ be a consistent dimer model and
$D$ be an internal perfect matching on $G$.
Then there is an exact Lefschetz fibration
$p : X \to \bC$
whose Fukaya category is equivalent
to the directed $A_\infty$-category $\dirscA$
associated with $(G, D)$.
\end{theorem}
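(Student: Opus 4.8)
The plan is to construct the fibration $p : X \to \bC$ explicitly from the combinatorics of the pair $(G,D)$ and then establish a quasi-equivalence with $\dirscA$ by matching, in turn, the objects (vanishing cycles), the morphism spaces (Floer complexes), and the $A_\infty$-operations (counts of holomorphic polygons); the last of these is the substantive point.

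First I would fix the smooth fiber. Since the quiver $\dirGamma$ and the category $\dirscA$ are read off from the graph $G$ drawn on $T = \bR^2/\bZ^2$, the natural fiber is a punctured Riemann surface $\Sigma$ built from $G$, on which each face of $G$ determines an embedded exact Lagrangian circle $V_v$ and each edge of $G$ --- equivalently each arrow of $Q$ --- determines a transverse intersection point of the two circles dual to its adjacent faces. I would then realize $p$ as an exact Lefschetz fibration with fiber $\Sigma$ carrying exactly one critical point per face of $G$, the associated vanishing cycle being $V_v$. The internal perfect matching $D$ enters through Lemma \ref{lm:int_pm_directing}: the induced total order $<$ on the faces is used to place the critical values in the base and to choose the distinguished basis of vanishing paths, so that the ordering of the $V_v$ along the distinguished basis agrees with the ordering $\bfY$ built into $\dirscA$. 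Because the Fukaya category of an exact Lefschetz fibration is directed by construction (Seidel), this already reproduces, at the level of objects, the one-sided $\hom$-spaces $\hom_{\dirscA}(Y_i, Y_j)$, which are non-zero only for $i \le j$.

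Next I would equip $p$ with a grading so that Floer cohomology is $\bZ$-graded, and compute the morphism spaces. For $i < j$ the generators of $CF(V_i, V_j)$ are the intersection points $V_i \cap V_j$, which by construction are in canonical bijection with the edges of $G$ separating the faces $i$ and $j$, hence with the arrows of $Q$ between the corresponding vertices. A Maslov-index computation should place each such generator in degree $1$ or $2$ according to the orientation of the underlying arrow relative to $<$, reproducing respectively the arrows of $\dirGamma$ and the duals of the remaining arrows in the definition of $\scA$; together with the identities in degree $0$ this matches $\hom_{\dirscA}(Y_i,Y_j)$ on the nose.

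The heart of the proof is the identification of the $A_\infty$-operations. In the surface picture the structure maps $\m_k$ count rigid immersed holomorphic polygons with boundary on the $V_v$ and corners at the chosen intersection points, and I would argue that these polygons are in canonical bijection with the minimal cyclic paths of $Q$ around the nodes of $G$ --- precisely the data recorded by the potential $\Phi$, and hence by the operations $\mA_k$ of $\scA$, a polygon around a white node contributing $+a_0$ and one around a black node contributing $-a_0$. Concretely I would (i) arrange the geometry, or choose an adapted almost complex structure, so that the expected polygons are visibly present and embedded; (ii) run an energy- and index-based argument to show that no other rigid configurations occur, so that all products outside the predicted terms vanish; and (iii) carry out the orientation (spin-structure) bookkeeping needed to pin down the signs and match the $\pm$ prescription of $\scA$. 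The directedness is a genuine simplification here, since it forces most would-be products to vanish for degree reasons and leaves only the finitely many polygons attached to the nodes of $G$. The main obstacle is exactly this polygon count: establishing exhaustiveness (ruling out unwanted discs) and fixing the sign normalization. Once objects, morphisms and operations are matched, the resulting quasi-equivalence $\Fuk p \cong \dirscA$ completes the proof, and combined with Proposition \ref{prop:directed_A-infinity-quiver} it yields Theorem \ref{th:main}.
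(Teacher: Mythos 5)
Your plan follows the same architecture as the paper's proof: a surface fiber built combinatorially from $G$ (the paper builds it from the ribbon-graph structure of $G$, gluing one disk per node, Lemma \ref{lm:M}), one exact Lagrangian circle $C_v$ per face with intersection points in bijection with arrows, realization of these circles as a distinguished basis of vanishing cycles of an exact Lefschetz fibration (the paper gets this from \cite[Lemma 16.9]{Seidel_PL}), a grading placing each generator in degree $1$ or $2$ according to whether its edge lies in $D$ (Lemma \ref{lem:grading}), and identification of the $\m_k$ with polygon counts around the nodes of $G$, with sign $+$ at white nodes and $-$ at black nodes (Lemma \ref{lem:sign}). So at the level of strategy there is no divergence.

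The genuine gap is the step you yourself flag as the main obstacle and then leave unresolved: the sign normalization. Saying you would ``carry out the orientation (spin-structure) bookkeeping'' is not an argument. The required statement is a \emph{global} coherence assertion: for a single choice of spin structures and base points on all the $C_v$ simultaneously, every white-node polygon must contribute $+$ and every black-node polygon must contribute $-$; nothing in Seidel's sign rule makes this automatic, and a local check node-by-node does not suffice because each circle passes through many nodes. The paper's mechanism for this is the nonobvious part of its proof: it invokes the structure theorem of \cite{Ishii-Ueda_09} that every consistent dimer model is obtained from the minimal dimer model $G_0$ (one black node, one white node, three edges) by successively adding divalent nodes or edges, and then runs an induction on these moves --- adding an edge splits one vanishing cycle into two and disturbs the sign count at exactly the two adjacent nodes, which is offset by placing a new base point on the newly created cycle, using Seidel's sign rule from \cite[Section (9e)]{Seidel_K3}. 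Without this structure theorem, or some equivalent global device, your step (iii) does not go through. Two secondary gaps of the same kind: the grading whose existence you assume must actually be \emph{constructed} so that all Maslov indices come out as $1$ or $2$ coherently (the paper does this on the universal cover of $T$, propagating choices face by face and checking consistency around nodes, which again uses that $D$ comes from a total order via Lemma \ref{lm:int_pm_directing}); and exactness of the circles $C_v$ is not free --- on an exact symplectic surface a circle need not be exact, and the paper's Lemma \ref{lem:exact} arranges it by a perturbation argument exploiting that both sides of each $C_v$ reach infinity in the noncompact fiber.
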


An {\em exact Lefschetz fibration} is
a $J$-holomorphic function
$
 p : \scS \to \bC
$
on an exact almost K\"{a}hler manifold $(\scS, \omega, J)$
such that all the critical points are non-degenerate.
We also assume that
$J$ is integrable near the critical points, and
the horizontal lift $\gammatilde_x : [0, 1] \to X$
of a smooth path
$\gamma : [0, 1] \to \bC$
starting at $x \in p^{-1}(\gamma(0))$
is always defined.
A {\em distinguished basis of vanishing cycles} is
a collection $(C_1, \dots, C_m)$ of Lagrangian spheres
in the regular fiber of $p$
which collapse to critical points
by parallel transport along a {\em distinguished set of
vanishing paths}, cf. \cite[Section 16]{Seidel_PL}.
By the Fukaya category of $p$,
we mean the directed subcategory
of the Fukaya category of the regular fiber of $p$
consisting of a distinguished basis of vanishing cycles.
Recall from \cite[Lemma 16.9]{Seidel_PL} that
any collection of framed exact Lagrangian spheres
in an exact symplectic manifold
can be realized as a distinguished basis of vanishing cycles
of an exact Lefschetz fibration.

\begin{lemma} \label{lm:M}
There is a 2-manifold $M$ and a collection $(C_v)_{v \in V}$
of embedded circles on $M$ such that intersections of
$C_v$ and $C_{v'}$ are transverse and
in natural bijection
with arrows between $v$ and $v'$.
\end{lemma}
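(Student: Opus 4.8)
The plan is to construct $M$ and the circles $(C_v)_{v \in V}$ directly from the combinatorics of $G$, or equivalently of the dual quiver $Q = (V, A, s, t)$ drawn on $T$. Recall that the vertices $V$ of $Q$ are the faces of $G$ and the arrows $A$ are the edges of $G$, so that each face $v$ has a well-defined cyclic sequence of bounding edges coming from the orientation of $T$. I want each $C_v$ to pass exactly once through a marked point $p_a$ on each edge $a$ of $\partial v$, and I want the two circles $C_v, C_{v'}$ associated with the two faces meeting along a common edge $a$ to cross transversally exactly once, at $p_a$. If this is arranged, then by construction $C_v \cap C_{v'}$ consists precisely of the points $p_a$ for the edges $a$ separating $v$ and $v'$, which are exactly the arrows between $v$ and $v'$, giving the desired natural bijection.

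To make this rigorous I would realise the $C_v$ as the straight-ahead strands of a ribbon graph. First I would form the $4$-valent graph $H$ whose vertex set is $A$: at the vertex $p_a$ coming from an edge $a$ with adjacent faces $v, v'$, the four half-edges are partitioned into a pair belonging to $C_v$ and a pair belonging to $C_{v'}$, and the edges of $H$ join cyclically consecutive arrows around each face (so that the $C_v$-strand runs through the arrows of $\partial v$ in their cyclic order). Next I would equip each vertex with the ribbon (cyclic) structure in which the two $C_v$-half-edges and the two $C_{v'}$-half-edges interleave; this is exactly the local model of a transverse double point. Taking the ribbon surface determined by these cyclic orders and capping off each of its boundary circles with a disc produces a closed $2$-manifold $M$ in which the strands are the required circles $C_v$, meeting transversally with $C_v \cap C_{v'}$ in bijection with the arrows between $v$ and $v'$.

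A few points I would then check or record. The cyclic order needed to define the strands is supplied by the embedding $G \subset T$, and both transversality and the intersection count are immediate from the chosen local model. Each $C_v$ is embedded as long as the face $v$ is not glued to itself along an edge; a self-adjacency (a loop at $v$ in $Q$) produces one transverse self-intersection of $C_v$, which is the natural reading of ``arrows between $v$ and $v'$'' in the case $v = v'$. I would also note that the capping can be performed along the cycles of arcs running around the black and white nodes of $G$, so that the complementary regions of $\bigcup_v C_v$ in $M$ are the node polygons; this is the feature that will later let holomorphic polygons reproduce the operations $\mA_k$ of $\scA$, although it is not needed for the present statement.

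The hard part will be the bookkeeping at the vertices and nodes. One must verify that the interleaving local model genuinely yields a transverse crossing rather than a tangency, that no extra intersection points are created where several circles pass near a common node, and that the arcs surrounding each node close up into a single boundary circle so that the capping produces a genuine surface. In other words, the main obstacle is checking that all the local crossing models glue consistently around every node of $G$ into a well-defined $2$-manifold with exactly the stated intersection pattern; once this local-to-global consistency is established, the bijection with arrows and the transversality follow directly from the construction.
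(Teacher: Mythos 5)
Your construction is, at heart, the same as the paper's: the paper takes the ribbon graph obtained from $G$ by reversing the cyclic orders at the black nodes, thickens it by assigning a disk to each node, and lets the faces of $G$ trace out the circles $C_v$, with one transverse crossing inside the ribbon of each shared edge; your $4$-valent diagram $H$ is the same curve system, thickened along the curves instead of along the node disks. The genuine gap is exactly at the point you try to talk your way around: embeddedness. You concede that a loop of $Q$ at $v$ (a self-adjacent face) forces a self-crossing of $C_v$ and propose to reread the lemma as permitting this, but the lemma asserts \emph{embedded} circles, and the sequel requires it: the $C_v$ must become vanishing cycles of an exact Lefschetz fibration, hence embedded Lagrangian spheres, and the grading and sign arguments (Lemmas \ref{lem:grading} and \ref{lem:sign}) presuppose this. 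The missing idea is that the standing hypotheses exclude loops. Lemma \ref{lm:M} is invoked inside Theorem \ref{th:fuk-A-infinity}, where $G$ is consistent and carries an internal perfect matching $D$; by Lemma \ref{lm:int_pm_directing} such a $D$ comes from a total order, so $Q_D = \dirQ$ contains no oriented cycle, whereas a loop $a$ would produce one --- $a$ itself if $a \nin D$, or, if $a \in D$, the path in $Q_D$ going around a node adjacent to the edge dual to $a$ (this path avoids $D$ precisely because $D$ is a perfect matching). This is how the paper secures that the $C_v$ are embedded, and your proof needs this step.

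A second misstep is harmless for the literal statement but fatal for its intended use: you cap off \emph{all} boundary circles of the ribbon surface to obtain a closed $M$. Lemma \ref{lem:exact} must then equip $M$ with an exact symplectic form $\omega = d\theta$ making every $C_v$ exact, which is impossible on a closed surface ($\int_M \omega > 0$ for an area form, while $\int_M d\theta = 0$ by Stokes); moreover the paper's proof of Lemma \ref{lem:exact} explicitly uses non-compactness, perturbing $C_v$ toward points at infinity on either side to adjust $\int_{C_v} \theta$ arbitrarily. The paper's $M$ is therefore left open: only the node polygons are filled in (by the disks assigned to the nodes of $G$), which is also all that the holomorphic-polygon count in Lemma \ref{lem:sign} needs, while the remaining complementary regions are left unbounded so that each $C_v$ has points at infinity on both sides. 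So keep your thickened crossing diagram, glue in disks along the node polygons only, and do not cap the rest.
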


\begin{proof}
Recall that a {\em ribbon graph} is
a graph together with the choice of a cyclic order
on the set of edges connected to each node.
A graph underlying a dimer model naturally has
a ribbon structure
by first giving the cyclic order
on the set of edges around each node
coming from the orientation of $T$, and
then reversing those around the black nodes.
A ribbon graph determines a 2-manifold $M$
by assigning a disk to each node
and gluing them together
as designated by the ribbon structure.
For each vertex $v$ of $\Gamma$
(i.e. for each face of $G$),
one can associate an immersed circle $C_v$ in $M$,
so that arrows of $\Gamma$ naturally correspond
to intersection points between them.
They do not have self-intersections
since the quiver $\Gamma$ does not have a loop,
i.e. an arrow $a$ such that $s(a) = t(a)$;
if such an arrow exists,
no perfect matching comes from a total order
on the set of vertices of $\Gamma$,
so that there can be no internal perfect matching
by Lemma \ref{lm:int_pm_directing}.
\end{proof}

We let $M$ and $C_v$ denote the 2-manifold and
the embedded circles constructed in the proof of
Lemma \ref{lm:M} henceforth.

\begin{lemma} \label{lem:exact}
The 2-manifold $M$ admits an exact symplectic form $\omega$
such that $C_v$ are exact Lagrangian submanifolds.
\end{lemma}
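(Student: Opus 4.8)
The plan is to exploit that $M$ is two-dimensional and open, reducing everything to a period computation. First I would record two standard reductions. Since $\dim M = 2$, a symplectic form on $M$ is nothing but an area form, and any embedded circle is automatically Lagrangian (the restriction of a $2$-form to a $1$-manifold vanishes identically); so the only real content of the lemma is the \emph{exactness} of the $C_v$. Second, since $M$ deformation retracts onto the graph underlying $G$, it is homotopy equivalent to a $1$-complex, whence $H^2(M;\bR)=0$ and every area form $\omega$ on $M$ is automatically exact, $\omega = d\theta$. Thus it remains only to produce $\omega$ together with a primitive $\theta$ satisfying $\int_{C_v}\theta = 0$ for every $v\in V$, because for the circle $C_v\cong S^1$ the restriction $\theta|_{C_v}$ is exact precisely when its integral vanishes.

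Next I would use the freedom in the choice of primitive. Replacing $\theta$ by $\theta + \alpha$ for a closed $1$-form $\alpha$ leaves $\omega$ unchanged and shifts each period by $\int_{C_v}\alpha$. Since the de Rham pairing identifies $H^1(M;\bR)$ with $\Hom(H_1(M;\bR),\bR)$, one can realise \emph{any} prescribed functional on $H_1(M;\bR)$ as such periods. Consequently the vector $(\int_{C_v}\theta)_{v\in V}$ can be annihilated by a suitable $\alpha$ if and only if it is orthogonal to every linear relation $\sum_v n_v[C_v]=0$ holding in $H_1(M;\bR)$. For such a relation, writing $\sum_v n_v C_v = \partial S$ for a $2$-chain $S$ and applying Stokes gives $\sum_v n_v\int_{C_v}\theta = \int_S\omega$, so the obstruction is exactly the collection of signed $\omega$-areas $\int_S\omega$ attached to the homological relations among the $C_v$.

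The heart of the matter is therefore to understand the relations among the classes $[C_v]$ in $H_1(M;\bR)$. As $M$ retracts to the connected graph $G$, an Euler-characteristic count gives $\operatorname{rank} H_1(M;\bR) = |E|-|B|-|W|+1 = |V|+1$, leaving room for the $|V|$ classes $[C_v]$ to be independent. I expect this to be precisely where the reversal of the cyclic order at the black nodes enters: for the \emph{unmodified} ribbon structure $M$ would be a regular neighbourhood of $G$ in $T$, the $C_v$ would be the boundaries of the removed face-disks, and they would satisfy the single relation $\sum_v[C_v]=0$ whose bounding chain is all of $M$ with constant multiplicity, forcing $\sum_v\int_{C_v}\theta = \pm\operatorname{area}(M)\neq 0$ and obstructing exactness. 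Reversing at the black nodes alters the identification of the $C_v$ as cycles so that adjacent faces are traversed with opposite local orientation; the main work is to show that in the modified surface every bounding $2$-chain $S$ of a relation has multiplicity function of indefinite sign, and then to choose $\omega$ (say by assigning equal areas to the disks and bands of the ribbon presentation) so that the positive and negative contributions to each $\int_S\omega$ cancel. Equivalently, one shows the $[C_v]$ are linearly independent in $H_1(M;\bR)$, after which the second paragraph produces $\theta$ directly. This combinatorial identification of the cycle relations in the modified ribbon surface is the step I expect to be the main obstacle; the rest is formal.
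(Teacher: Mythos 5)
Your reduction is sound as far as it goes: in dimension two a symplectic form is an area form, circles are automatically Lagrangian, $H^2(M;\bR)=0$ forces exactness of $\omega$, and exactness of $C_v$ is the vanishing of the period $\int_{C_v}\theta$, which you may try to arrange by adding a closed one-form to $\theta$. But the argument then stands or falls on the step you yourself flag as the main obstacle and do not carry out: showing that the classes $[C_v]$ are linearly independent in $H_1(M;\bR)$ (or, failing that, that $\omega$ can be chosen so that $\int_S\omega=0$ for every $2$-chain $S$ bounding a relation $\sum_v n_v C_v$). Nothing in your text establishes either statement; the Euler-characteristic count only shows that independence is not ruled out by rank, and the proposed device of ``assigning equal areas to disks and bands'' is a hope, not a proof. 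As written, the proposal is therefore a correct reduction of the lemma to an unproven combinatorial-topological claim, i.e.\ it has a genuine gap exactly at the point where all the work lies.

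The paper's proof sidesteps this entirely, and it is worth seeing how: it fixes an exact \emph{complete} K\"ahler form $\omega=d\theta$ once and for all, and instead of modifying $\theta$ it perturbs each curve $C_v$ to an isotopic curve $C_v'$. By Stokes, the period changes by the signed area $\int_D\omega$ of the region $D$ swept out, and because $M$ is an open surface in which \emph{both} sides of every $C_v$ contain points at infinity (each band of the ribbon surface has boundary of $M$ on both sides of the core curve), completeness gives infinite area to push into on either side; hence the swept area can be made equal to any prescribed $R\in\bR$, in particular $R=\int_{C_v}\theta$, killing each period one curve at a time. Such a perturbation can be supported away from the intersection points, so the transverse intersection pattern of Lemma \ref{lm:M} is preserved, and no information about relations in $H_1(M;\bR)$ is ever needed. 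If you want to rescue your approach, you must either prove the independence of the $[C_v]$ in the black-node-reversed ribbon surface or switch to the paper's curve-perturbation argument, which trades the global homological question for a local, non-compactness argument.
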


\begin{proof}
Choose an exact complete K\"{a}hler structure on $M$
and let $\theta$ be a one-form on $M$ such that
$\omega = d \theta$ is the K\"{a}hler form.
Recall that $C_v$ is said to be exact
if $\int_{C_v} \theta = 0$.
If we perturb $C_v$ to $C_v'$,
then Stokes' theorem states that
$$
 \int_{C_v} \theta - \int_{C_v'} \theta
  = \int_D \omega,
$$
where $D$ is the region surrounded by $C_v$ and $C_v'$;
$
 \partial D = C_v - C_v'.
$
Note that both sides of $C_v$ contains a point at infinity,
i.e., a point in $\Mbar \setminus M$
where $\Mbar$ is a compactification of $M$.
It follows that for any $R \in \bR$,
one can choose $C_v'$ such that
$\int_D \omega = R$.
By choosing $R = \int_{C_v} \theta$,
one obtains $\int_{C_v'} \theta = 0$ as desired.
\end{proof}

Let
$$
 p : \scS \to \bC
$$
be an exact Lefschetz fibration such that
$
 p^{-1}(0) \cong M
$
and $(C_v)_{v \in V}$ forms a distinguished basis of vanishing cycles.
To equip the Fukaya category with a $\bZ$-grading,
we need {\em gradings} of $M$ and $C_v$:
A {\em grading} of a symplectic manifold $(M, \omega)$
is the choice of a fiberwise universal cover
$
 \Lagtilde_M
$
of the Lagrangian Grassmannian bundle
$
 \Lag_M \to M.
$
A Lagrangian submanifold $L \subset M$ naturally gives a section
$$
\begin{array}{cccc}
 s_L : & L & \to & \Lag_M|_L \\
 & \vin & & \vin \\
 & x & \mapsto & T_x L,
\end{array}
$$
and a {\em grading} of $L$ is a choice of a lift of $s_L$ to
$$
 \stilde_L : L \to \Lagtilde_M|_L.
$$
When $M$ is equipped with a compatible almost complex structure
so that $(M, \omega, J)$ is an almost K\"{a}hler manifold,
the choice of a grading of $M$ is equivalent
to the choice of a nowhere-vanishing continuous section
$
 \eta
$
of the square
$
 (\Lambda^{\mathrm{top}} T^* M)^{\otimes 2}
$
of the canonical bundle.
Such a section exists
if and only if
$2 c_1(T M, J) = 0 \in H^2(M, \bZ)$,
and the homotopy classes of sections are classified
by $H^1(M, \bZ)$.
A section $\eta$ induces a map
$$
\begin{array}{cccc}
 {\det}^2_{\eta} : & \Lag_M & \to & \bCx / \bR^{>0} \cong S^1 \\
 & \vin & & \vin \\
 & \vspan \{ e_1, \dots, e_n \} & \mapsto
 & [\eta((e_1 \wedge \cdots \wedge e_n)^{\otimes 2})].
\end{array}
$$
The composition of $s_L$ and ${\det}_\eta^2$
will be denoted by $\phi_L$, and
a grading of $L$ is equivalent to
a lift $\phitilde_L : L \to \bR$
of $\phi_L : L \to S^1$ to the universal cover
$\bR \to S^1$.

Given a pair $(L_1, L_2)$ of graded Lagrangian submanifolds,
one can define the {\em Maslov index} $\mu(x; L_1, L_2)$
for each intersection point $x \in L_1 \cap L_2$.
If $\dim_{\bC} M = 1$,
then it is given by the round-up
$$
 \mu(x; L_1, L_2)
  = \lfloor \phitilde_{L_2}(x) - \phitilde_{L_1}(x) \rfloor
$$
of the difference of the phase function at $x$.

\begin{lemma} \label{lem:grading}
There is a grading of $(M, \omega)$ and $C_v$
such that the Maslov index of $e \in C_v \cap C_w$
for $v > w$ is given by
$$
 \mu(e; C_v, C_w) =
 \begin{cases}
   1 & e \nin D, \\
   2 & e \in D.
 \end{cases}
$$
\end{lemma}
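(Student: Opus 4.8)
The plan is to reduce the statement to two independent pieces of data and control each in turn. A grading of $M$ is a nowhere-vanishing section $\eta$ of $(\Lambda^{\mathrm{top}} T^* M)^{\otimes 2}$, whose homotopy classes form a torsor over $H^1(M, \bZ)$; a grading of a circle $C_v$ is then a lift $\phitilde_{C_v} : C_v \to \bR$ of the phase $\phi_{C_v} : C_v \to S^1$, which exists exactly when $\phi_{C_v}$ is null-homotopic and is unique up to an additive integer. I would therefore first arrange that every $\phi_{C_v}$ has winding number zero, so that all lifts exist, and then fix the remaining integer constant of each lift, one per circle, so that the formula $\mu(e; C_v, C_w) = \lfloor \phitilde_{C_w}(e) - \phitilde_{C_v}(e) \rfloor$ yields the prescribed values. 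Because every crossing is transverse, the difference $\phitilde_{C_w}(e) - \phitilde_{C_v}(e)$ is never an integer, so its fractional part lies in $(0,1)$ and the Maslov index is governed entirely by the integer part, i.e. by the global lifts.

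For the existence of lifts I would compute the rotation of the tangent line of $C_v$ as it traverses the face $v$. The corners of $v$ are the nodes of $G$ around it, and since $G$ is bicolored they alternate between white and black. The ribbon structure underlying $M$ was built by taking the cyclic order from the orientation of $T$ at white nodes and the \emph{reversed} order at black nodes, so $C_v$ turns in one sense at a white corner and in the opposite sense at a black corner. As each face is bounded by a closed walk of even length with equally many white and black corners, these turnings cancel, the total rotation of the tangent line of $C_v$ vanishes, and each $\phi_{C_v}$ is null-homotopic for a flat $\eta$ (which exists because $M$ retracts onto a graph, so $2 c_1(T M) = 0$ automatically). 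This is the same black/white asymmetry responsible for the sign difference between white and black nodes in the potential $\Phi$.

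For the integer parts I would combine a local model at each crossing with the total order. Near the point over an edge $e$ I choose holomorphic coordinates trivializing $\eta$ in which $C_v$ and $C_w$ are straight segments meeting transversally; by the previous paragraph the integer offsets are then the only free data. Using that $D$ is internal, hence comes from a total order $<$ by Lemma \ref{lm:int_pm_directing}, I would fix the constant of each $\phitilde_{C_v}$ monotonically along $<$. The key combinatorial input is that the matching edges are exactly the arrows $a$ with $s(a) > t(a)$: for $v > w$, a non-matching edge carries the arrow $a : w \to v$ pointing up the order, contributing a degree-$1$ generator $a \in \homA^1(v, w)$, whereas a matching edge carries $a : v \to w$ pointing down, contributing the degree-$2$ dual $a^\vee \in \homA^2(v, w)$. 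Tracking the arrow direction through the induced orientations of the two circles, one checks that reversing the arrow relative to $<$ shifts the lifted phase difference by exactly one unit, so that the difference lands in $(1,2)$ when $e \nin D$ and in $(2,3)$ when $e \in D$, giving $\mu = 1$ and $\mu = 2$ respectively.

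The hard part will be the global coherence of the last step: a single section $\eta$ and a single assignment of integer constants must simultaneously place the phase difference in the correct interval at \emph{every} crossing, while respecting the winding-zero constraint. The crux is to verify that the local orientation data at the crossings, the oscillation of $\phitilde_{C_v}$ between consecutive edges of a face, and the monotone offsets dictated by $<$ are mutually compatible. This is exactly where consistency of $G$ and internality of $D$ enter: consistency guarantees that the cyclic orders around the nodes assemble without contradiction, and the total order from Lemma \ref{lm:int_pm_directing} provides a globally coherent rule for the lifts. Careful bookkeeping of the white/black sign at each corner is the technical heart of the argument.
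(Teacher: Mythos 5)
Your setup (phase functions, lifts to $\bR$, the floor formula, and the numerology matching $e \nin D$ with index $1$ and $e \in D$ with index $2$) is the right framework, but there is a genuine gap at exactly the point you defer as ``the hard part'': you never construct the single section $\eta$ and system of lifts realizing the prescribed index at \emph{every} crossing, and the mechanism you propose cannot do it as stated. Fixing a ``flat'' $\eta$ in advance and then adjusting only one integer constant per circle gives far too few parameters: each $C_v$ meets many other circles, $\phitilde_{C_v}$ oscillates along $C_v$, and the unit interval containing each difference $\phitilde_{C_w}(e) - \phitilde_{C_v}(e)$ depends on the geometry of the circles near each crossing, not only on the additive constants. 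Your winding-number argument is also only heuristic: the turnings at white and black corners need not have equal magnitudes for an arbitrary flat structure, and even granted that, killing the winding numbers of \emph{all} the $C_v$ with a single homotopy class of $\eta$ (one element of $H^1(M,\bZ)$ against one constraint per circle) is a homological compatibility statement you do not verify.

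The missing idea --- and the paper's actual route --- is to build the grading locally and inductively instead of fixing $\eta$ first. Pull the dimer model back to the universal cover $\bR^2$ of $T$, let $\Gtilde$ and $\Mtilde$ be the lifted dimer model and surface, grade a tubular neighborhood of one vanishing cycle $C_{\vtilde_0}$ together with $C_{\vtilde_0}$ itself, and propagate: each time a vertex $\vtilde$ is adjacent by an arrow to an already-graded vertex $\vtilde'$, choose the grading of a tubular neighborhood of $C_{\vtilde}$ and of $C_{\vtilde}$ so that this single Maslov index takes the prescribed value determined by whether the arrow goes up or down the order. Because the quiver is the planar dual of $\Gtilde$ in the simply connected $\bR^2$, every loop in the adjacency structure is generated by the cycles around nodes of $\Gtilde$, so the only coherence to check is that these prescriptions glue around each node --- a purely local verification --- after which the gradings of $\Mtilde$ and the $C_{\vtilde}$ descend to $M$ and the $C_v$. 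This one device dissolves both of your unresolved issues simultaneously: $\eta$ is assembled from the graded tubular neighborhoods rather than chosen beforehand, and the global bookkeeping you call the ``technical heart'' reduces to a node-by-node check; consistency and internality enter only through Lemma \ref{lm:int_pm_directing} (already used in Lemma \ref{lm:M} to exclude loops of the quiver), not as the engine of the coherence argument.
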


\begin{proof}
Since $\dim_{\bR} M = 2$ in our case,
the Lagrangian Grassmannian bundle is the principal $S^1$-bundle
associated with $(T M)^{\otimes 2}$,
and a grading is a trivialization (i.e., a section) of it.
Let $\Gtilde$ be the pull-back of $G$ on $T = \bR^2 / \bZ^2$
to the universal cover $\bR^2$ of $T$, and
$\Mtilde$ be the 2-manifold associated with $\Gtilde$.
Fix a vertex $\vtilde_0$ of the quiver associated with $\Gtilde$ and
choose a grading of a tubular neighborhood of
$C_{\vtilde_0} \subset \Mtilde$
so that $C_{\vtilde_0}$ admits a grading.
Fix a grading of $C_{\vtilde_0}$ and
choose a grading of $C_\vtilde$ for other $\vtilde$
successively as follows;
if a vertex $\vtilde$ is adjacent by an arrow $a$
to another vertex $\vtilde'$
where the gradings of a tubular neighborhood of $C_{\vtilde'}$
and of $C_{\vtilde'}$ are already defined,
we choose a grading of a tubular neighborhood of $C_\vtilde$
and of $C_\vtilde$
so that the Maslov index $\mu(a; C_{\vtilde'}, C_\vtilde)$ is
one if $\vtilde' > \vtilde$, and minus one if $\vtilde' < \vtilde$.
Since these gradings on the tubular neighborhoods of
vanishing cycles glue together coherently
around a node of $\Gtilde$,
this defines gradings of $\Mtilde$ and $C_{\vtilde}$,
which descends to gradings of $M$ and $C_v$.
\end{proof}

A {\em relative grading} $\eta$
of an exact Lefschetz fibration $p : \scS \to \bC$
is a nowhere-vanishing section of the holomorphic line bundle
$
 \Lambda^{\mathrm{top}} (T^* \scS)^{\otimes 2}
  \otimes p^* (T^* \bC)^{\otimes (-2)},
$
which induces a grading on a regular fiber of $p$.
One can equip $p$ with a relative grading
such that the induced grading on $M$ coincides
with the one in Lemma \ref{lem:grading}
\cite[Section (16f)]{Seidel_PL}.

The following lemma concludes
the proof of Theorem \ref{th:fuk-A-infinity}:

\begin{lemma} \label{lem:sign}
Let $(e_0, e_1, \dots, e_k)$ be the sequence of edges of $G$
around a node respecting the cyclic order
such that $e_0 \in D$.
Then one has an $A_\infty$-operation
\begin{equation} \label{eq:A_infinity-operation}
 \m_k(e_k, \dots, e_1) = \pm e_0
\end{equation}
in the Fukaya category $\Fuk p$ of Lefschetz fibration,
where the sign is positive if the node is white,
and negative if it is black.
\end{lemma}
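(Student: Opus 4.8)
The plan is to realize the asserted operation by the single rigid holomorphic polygon that comes for free from the ribbon-graph construction of $M$, and then to extract its sign from Seidel's orientation conventions. First I would observe that the node in question corresponds, in the construction of Lemma \ref{lm:M}, to one of the disks glued into $M$, and that the arcs of the circles $C_v$ running around the boundary of this disk meet exactly at the intersection points $e_0, \dots, e_k$ dual to the edges around the node. Thus there is an embedded $(k+1)$-gon $\Delta \subset M$ whose corners are $e_0, \dots, e_k$ and whose sides lie on the circles $C_v$ indexed by the faces of $G$ surrounding the node. Since $M$ is a surface, the count of rigid pseudoholomorphic polygons reduces by the Riemann mapping theorem to a combinatorial count of immersed polygons with prescribed convex corners; with a suitable choice of compatible almost complex structure, $\Delta$ is the unique such polygon with these corners, and exactness (Lemma \ref{lem:exact}) together with the directed structure rules out bubbling and guarantees that only finitely many polygons contribute. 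Hence $\Delta$ contributes $\pm e_0$ to $\m_k(e_k, \dots, e_1)$ for some sign, and no other configuration interferes.

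Next I would confirm, using the grading of Lemma \ref{lem:grading}, that $e_0$ is genuinely the output of this operation. Because $D$ is a perfect matching, $e_0$ is the unique edge around the node lying in $D$, so by Lemma \ref{lem:grading} the corner $e_0$ carries Maslov index $2$ while each of $e_1, \dots, e_k$ carries Maslov index $1$. As $\m_k$ has degree $2-k$, the inputs $e_1, \dots, e_k$ of total degree $k$ produce an output of degree $2$, which is precisely the degree of $e_0$. Thus the corner $e_0$ is forced to be the output corner of $\Delta$ and the remaining $k$ corners are the inputs, in exact agreement with the shape $\m_k(e_k, \dots, e_1) = \pm e_0$ of the asserted operation. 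In particular the grading was arranged in Lemma \ref{lem:grading} so that the unique index-$2$ corner sits at the $D$-edge, which is what singles out $e_0$ as the output.

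The remaining and most delicate point is the sign, and this is where I expect the main difficulty to lie. I would fix orientations of the circles $C_v$ (equivalently, a spin structure on $M$) and evaluate the contribution of $\Delta$ following the conventions of \cite{Seidel_PL}, under which the sign of a rigid polygon in a surface is determined combinatorially by comparing the boundary orientation of $\Delta$ induced by its complex structure with the chosen orientations of the arcs on the $C_v$. The key observation is that the ribbon structure on the graph underlying $G$ was defined in the proof of Lemma \ref{lm:M} by \emph{reversing} the cyclic order around the black nodes; consequently the polygon $\Delta$ around a black node is traversed with the cyclic order opposite to the one around a white node. This single reversal flips the induced boundary orientation, and hence the sign of the contribution, so that $\Delta$ contributes $+e_0$ at a white node and $-e_0$ at a black node. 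Carrying this out rigorously on the universal cover, where one works with $\Gtilde$ and $\Mtilde$ and chooses the orientation of $C_{\vtilde}$ coherently as in Lemma \ref{lem:grading}, reduces the whole question to a finite orientation computation localized at $\Delta$. The hard part is precisely to verify that, once all of Seidel's orientation and spin-structure conventions are fully unwound, the discrepancy between the white and black cases amounts to exactly one sign flip and nothing more; matching this with the signs in the definition of $\scA$ then completes the identification of $\Fuk p$ with $\dirscA$.
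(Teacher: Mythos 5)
Your first two paragraphs reproduce what the paper treats as the easy part: since $M$ is a surface, the $A_\infty$-operations count polygons, the disk glued in at each node of the ribbon graph is the relevant polygon, and the gradings of Lemma \ref{lem:grading} force the shape $\m_k(e_k,\dots,e_1) = \pm e_0$. The paper says exactly this in one sentence and then states that ``only the sign is the issue here.'' It is in your third paragraph, which carries the entire content of the lemma, that there is a genuine gap. Under the sign rule the paper invokes (\cite[Section (9e)]{Seidel_K3}), the sign of a polygon is $(-1)^\dagger$, where $\dagger$ is the number of corners $i$ at which the orientation vector $\xi_i$ of $C_{v_i}$ (the orientation being dictated by the grading) points away from the polygon, plus the number of spin-structure base points on its boundary. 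Both ingredients are \emph{global} data: the orientations of the $C_v$ are determined by the inductive grading construction on the universal cover, and the base points realize the nontrivial spin structures. Your proposed mechanism --- that reversing the cyclic order at black nodes ``flips the induced boundary orientation, and hence the sign'' --- is not an argument in these terms: every polygon is an embedded disk in the oriented surface $M$, so its boundary orientation is induced from $M$ regardless of the color of the node; what distinguishes white from black polygons is the parity of $\dagger$, and nothing in your outline controls that parity polygon by polygon. You concede this yourself (``the hard part is precisely to verify\ldots''), which means the only nontrivial assertion of the lemma is left unproved.

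The paper's device for controlling $\dagger$, absent from your proposal, is an induction on the structure of the dimer model: by \cite{Ishii-Ueda_09}, every consistent dimer model is obtained from the simplest one $G_0$ (one black node, one white node, three edges) by successively adding divalent nodes or edges. Adding a divalent node changes neither $M$ nor the $C_v$; adding an edge between a black node $b$ and a white node $w$ splits one vanishing cycle into two, and the paper checks that the tangent directions $\xi_i$ at $b$ and at $w$ then point oppositely (one away from its disk, one toward), so that exactly one of the two polygons acquires an extra sign, which is offset by the base point one must newly place on the split vanishing cycle. This reduces the sign verification to the base case $G_0$ and makes the global bookkeeping of orientations and base points finite and checkable. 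Without this induction, or some substitute for it --- for instance a direct proof that the gradings of Lemma \ref{lem:grading} admit a simultaneous choice of base points making $\dagger$ even at every white node and odd at every black node --- your outline identifies the correct polygon but does not establish the sign, and hence does not prove the lemma.
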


\begin{proof}
Since $M$ is a 2-manifold,
the $A_\infty$-operations in the Fukaya category are given
by counting polygons,
so that they are given as in \eqref{eq:A_infinity-operation} and
only the sign is the issue here.
We write the vanishing cycles surrounding
a polygon as $C_{v_i}$, $i = 0, \dots, k$
so that
$
 e_0 \in C_{v_0} \cap C_{v_k}
$
and
$
 e_i \in C_{v_{i-1}} \cap C_{v_i}
$
for
$
 i = 1, \dots, k.
$
The grading of $C_{v_i}$ defines an orientation of $C_{v_i}$,
and let $\xi_i$, $i = 1, \dots, k$ be
the unit tangent vector of $C_{v_i}$
at $e_i \in C_{v_{i-1}} \cap C_{v_i}$
along the orientation.
We also choose a point on each vanishing cycle $C_v$,
which comes from the choice of the non-trivial spin structure.
Then it follows from the Seidel's sign rule
\cite[Section (9e)]{Seidel_K3}
that the sign in \eqref{eq:A_infinity-operation}
is given by $(-1)^\dagger$,
where $\dagger$ is the sum
of the number of $i \in [1, k]$ such that
$\xi_i$ points away from $\varphi(D^2)$ and
the the number of points on $\varphi(\partial D^2)$
coming from the spin structures.

Now we use the following fact
from \cite{Ishii-Ueda_09}:
Let $G_0$ be the simplest dimer model
consisting of one black node,
one white node and three edges,
which corresponds to the McKay quiver
for the trivial group.
Then any consistent dimer model can be obtained from $G_0$
by successively adding a divalent node or an edge.

Assume that a dimer model $G$ is obtained
from another dimer model $G'$
by adding an edge between a black node $b$
and a white node $w$.
It follows from the definition of the grading of $M$ and $C_v$
that $\xi_i$ either points away from $\varphi(D^2)$
or toward $\varphi(D^2)$
simultaneously for all $i$,
and the fact that $b$ and $w$ are adjacent in $G$
implies that if $\xi_i$ at $b$ points away from the disk
corresponding to $b$,
then $\xi_i$ at $w$ points toward the disk
corresponding to $w$,
and if $\xi_i$ at $b$ points toward the disk
corresponding to $b$,
then $\xi_i$ at $w$ points away from the disk
corresponding to $w$.
Then either of the disk at $b$ or $w$ receives
an extra sign for $G$ compared to that for $G_0$,
which can be off-set by the introduction of a new base point
on the vanishing cycle,
which is necessary since one of the vanishing cycles for $G'$ is divided
into two by adding an edge to $G'$.
This shows that
the dimer model $G$ satisfies the assertion of the lemma
if the dimer model $G'$ does.
Since the addition of a divalent node
does not change $M$ and $C_v$ at all,
the lemma is proved.
\end{proof}

The {\em double suspension}
$
 \pss : \scS \times \bC^2 \to \bC
$
of
$
 p : \scS \to \bC
$
is defined by
$$
 \pss(x, u, v) = p(x) + u v,
$$
and a vanishing cycle $C \subset p(0)$ naturally gives
a vanishing cycle $L \subset (\pss)^{-1}(0)$
called the {\em double suspension} of $C$.
The following theorem is due to Seidel:

\begin{theorem}[{Seidel \cite[Corollary 5.5]{Seidel_suspension}}]
The full subcategory of $\Fuk ((\pss)^{-1}(0))$
consisting of double suspensions $L_v$ of $C_v$ for $v \in V$
is equivalent to the trivial extension of $\Fuk p$
of degree 3.
\end{theorem}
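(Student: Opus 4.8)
The plan is to realize the double suspension as the operation that, on Floer-theoretic morphism spaces, glues each $\Fuk p$-morphism space to its dual placed into degree $3$, and then to check that the induced $A_\infty$-structure is exactly the trivial extension. Since $\pss(x, u, v) = p(x) + uv$ adjoins the two-variable node $uv$, whose Milnor fibre $\{uv = \epsilon\}$ is a cylinder carrying a single vanishing circle, each double suspension $L_v$ of a vanishing circle $C_v \subset p^{-1}(0) \cong M$ is an exact Lagrangian $3$-sphere in the smooth affine threefold $(\pss)^{-1}(0)$. First I would set up the Floer theory of the $L_v$ inside the honest (two-directional) Fukaya category of $(\pss)^{-1}(0)$ — so that the target is genuinely non-directed, as the trivial extension must be — and reduce the statement to a comparison of graded morphism spaces and of $A_\infty$-operations on the two sides.

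Next I would compute the morphism spaces. A K\"{u}nneth-type suspension formula for the node factor $uv$, whose vanishing circle contributes $H^*(S^1)$ with generators in degrees $0$ and $1$, combined with Poincar\'{e} duality in the fibre $M$, should yield
\[
 HF^k(L_i, L_j) \cong HF^k(C_i, C_j) \oplus \big( HF^{3-k}(C_j, C_i) \big)^\vee .
\]
The degree-$0$ generator of $H^*(S^1)$ reproduces the original morphism $HF^*(C_i, C_j)$, while the degree-$1$ generator, paired against the fundamental class of the fibre, produces the dual summand shifted into total degree $3$. This matches precisely the morphism spaces of the trivial extension $\dirscA \oplus (\dirscA)^\vee[-3]$; in fact it reconstructs the full graded morphism spaces of $\scA$, with the summand $HF^*(C_i, C_j)$ accounting for identities (degree $0$) and arrows (degree $1$), and the dual summand for dual arrows (degree $2$) and dual identities (degree $3$).

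Finally I would pin down the $A_\infty$-operations. Compositions all of whose inputs and output lie in the original summand are computed by the same holomorphic polygons that define $\Fuk p$, so they reproduce $\dirscA$ verbatim. The operations whose output lands in the dual summand are the new feature: here I would argue that a polygon contributing to such a product is governed, through the two extra $uv$-directions, by the cyclic pairing $\la \bullet, \bullet \ra$ of dimension $3$ on $\Fuk p$, so that the coefficient of a dual generator in $\m_k$ equals the pairing of an ordinary product $\m_{k-1}$ against the remaining input. This is exactly the defining formula for the products of the trivial extension built from a cyclic structure of dimension $3$, and it uses crucially that $\Fuk p \cong \dirscA$ carries such a pairing. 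The main obstacle is precisely this last step: controlling the moduli of holomorphic polygons in the suspended fibre, together with the signs dictated by Seidel's sign rule and the grading fixed in Lemma \ref{lem:grading}, tightly enough to see both that no operations beyond the trivial-extension ones appear and that the cyclic pairing is reproduced on the nose. A local model near the node $uv = 0$, patched with the fibrewise polygon count in $M$, would be the technical heart of the argument.
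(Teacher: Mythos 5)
The first thing to note is that the paper does not prove this statement at all: it is imported verbatim, with attribution, from Seidel's suspension paper (Corollary 5.5 there), and is used purely as a black box which, combined with the observation that $\scA$ is the degree-$3$ trivial extension of $\dirscA$, yields Corollary \ref{cor:local_hms}. So there is no internal argument to compare yours against; the benchmark is Seidel's own proof, which is the main content of an entire separate paper, and your outline does not reproduce it. What you have written is a correct statement of what must be shown, together with an acknowledgment that you do not know how to show it.

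Concretely, three gaps. (i) The K\"unneth heuristic for the morphism spaces is not merely unproven but misleading: $(\pss)^{-1}(0)$ is not a product of $M$ with a cylinder, it is a conic fibration over the total space $\scS$ degenerating along $p^{-1}(0)$, and $L_v$ is not $C_v \times S^1$ but the $S^1$-invariant lift of the Lefschetz thimble $\Delta_v \subset \scS$, i.e.\ an $S^1$-bundle over a disc collapsing over $\partial \Delta_v = C_v$ (which is why it is an $S^3$). If you read your displayed formula with $HF$ meaning honest Floer cohomology in $M$ --- the only reading under which ``Poincar\'e duality in the fibre'' makes sense --- it predicts morphism spaces of rank $2\,|C_i \cap C_j|$, which is twice the correct answer: each point of $C_i \cap C_j$ contributes exactly one transverse point of $L_i \cap L_j$ (the node of the conic over it). Read with $HF$ meaning the directed morphism spaces of $\Fuk p$, the formula is correct, but then the proposed justification collapses; the appearance of the \emph{directed} category is exactly what the localization of $L_v$ over the thimble $\Delta_v$ encodes, and your outline never engages with this geometry. (ii) Your claim that the products with output in the dual summand ``use crucially that $\Fuk p \cong \dirscA$ carries a cyclic pairing of dimension $3$'' is backwards: a directed category carries no nondegenerate pairing in any dimension, since for $i \neq j$ one of $\hom(Y_i,Y_j)$, $\hom(Y_j,Y_i)$ vanishes. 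The trivial extension is defined by dualization alone and \emph{produces} the cyclic structure on $\scA$; it does not consume one. (iii) Most seriously, the step you flag as ``the main obstacle'' is the entire theorem: one must show that the operations with exactly one input in the dual summand are the dualized operations of $\Fuk p$, that all operations with two or more dual inputs vanish, and that no further corrections occur. None of this follows from cyclicity, and Seidel's proof does not obtain it by the direct control of polygon moduli in the suspended fibre that you propose; it exploits the conic-fibration structure and the extra symmetry of the suspension coordinates (rotating $u$ and $v$ oppositely), which equips the Floer complexes with additional structure, and then identifies the $A_\infty$-structure by algebraic arguments about trivial extensions (cyclic completions). Without a substitute for that mechanism, your proposal remains a restatement of the result rather than a proof of it.
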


Since $\scA$ is the trivial extension of $\dirscA$ of degree 3
(or the {\em 3-dimensional cyclic completion}
in the terminology of \cite{Segal_ADT}),
Corollary \ref{cor:local_hms} is proved.

\bibliographystyle{alpha}
\bibliography{bibs}

\noindent
Masahiro Futaki

Graduate School of Mathematical Sciences,
The University of Tokyo,
3-8-1 Komaba Meguro-ku Tokyo 153-8914, Japan

{\em e-mail address}\ : \  futaki@ms.u-tokyo.ac.jp

\ \\

\noindent
Kazushi Ueda

Department of Mathematics,
Graduate School of Science,
Osaka University,
Machikaneyama 1-1,
Toyonaka,
Osaka,
560-0043,
Japan.

{\em e-mail address}\ : \  kazushi@math.sci.osaka-u.ac.jp
\ \vspace{0mm} \\

\end{document}